\newtheorem{thm}{Theorem}[section]
 \newtheorem{cor}[thm]{Corollary}
 \newtheorem{prop}[thm]{Proposition}
 \newtheorem{exam}[thm]{Example}
 \theoremstyle{definition}
 \theoremstyle{remark}
\begin{document}
\title[Iteration of a Multiplication
operator] {Frame properties induced by Iteration of a
Multiplication operator on Hardy spaces}

\author{Jahangir Cheshmavar$^*$}

\address{Department of Mathematics,
Payame Noor University, P.O.Box: 19395-4697, Tehran, Iran \\
\emph{j$_{_-}$cheshmavar@pnu.ac.ir}\\
}

\thanks{\it 2010 Mathematics Subject Classification: 42C15,
30H10, 46B15.}

\keywords{frame, multiplication operator, Hardy space, dynamical
sampling, maximum modulus principle, open mapping
theorem.\\
\indent $^{*}$ Corresponding author}
 \dedicatory{} \commby{}
\begin{abstract}
Motivated by the study of frame properties arising from iterates
of linear operators, it was previously established that the
multiplication operator $T_{\phi}x(t)=\phi(t)x(t)$ cannot generate
a frame in $L^2(a,b)$ (Results Math, 2019). In this note, we
examine the behavior of such operators on the Hardy space
$H^2(\mathbb{D})$, the Hilbert space of holomorphic functions on
the open unit disk $\mathbb{D}$ with square-integrable boundary
values. We show that, in contrast to the $L^2$-setting, the
iterates of $T_\phi$, for $\phi \in H^\infty(\mathbb{D})$, exhibit
fundamentally different frame properties in $H^2(\mathbb{D})$,
leading to new structural insights and results.
\end{abstract}
\maketitle

\section{\textbf{Introduction and Preliminaries}}\label{Sec1}
Let $\mathbb{T}$ be the unit circle, defined by $\{z \in
\mathbb{C}: |z|=1\}$ and let $\mathbb{D}$ is the open unit disk,
defined by $\{z \in \mathbb{C}: |z|<1\}$. The set of all bounded
holomorphic functions of $\mathbb{D}$ will be denoted by
$H^{\infty}(\mathbb{D})$. For any $\phi \in
H^{\infty}(\mathbb{D})$, define
\begin{eqnarray}
\|\phi\|_{\infty}:=\sup_{z \in \mathbb{D}}|\phi(z)|,
\end{eqnarray}
then $H^{\infty}(\mathbb{D})$ is a Banach space. A function $\phi
\in H^{\infty}(\mathbb{D})$ is inner if its radial boundary values
satisfy $|\phi(e^{i\theta})|=1$ a.e. on $\mathbb{T}$. Now let
$H^2(\mathbb{D})$ is the Hardy space on $\mathbb{D}$. By
definition, $H^2(\mathbb{D})$ consists of holomorphic functions
$f$ on $\mathbb{D}$ with finite norm
\begin{eqnarray}
\| f \|_{2}^2 := \sup_{0 < r < 1} \int_0^{2\pi} |f(r
e^{i\theta})|^2 \frac{d\theta}{2\pi} < \infty.
\end{eqnarray}
Equivalently, for $f(z) = \sum_{n=0}^\infty a_n z^n$, the norm is
$\|f\|_{2}^2 = \sum_{n=0}^\infty |a_n|^2 < \infty$. Thus,
$H^2(\mathbb{D})$ is a Hilbert space isomorphic to
$\ell^2(\mathbb{N}_0)$, with the monomials $\{z^n\}_{n=0}^\infty$
forming an orthonormal basis. For any $f,g \in H^2(\mathbb{D})$,
the inner product is
\begin{eqnarray}
\langle f, g \rangle := \lim_{r \to 1^-} \int_0^{2\pi} f(r
e^{i\theta}) \overline{g(r e^{i\theta})} \frac{d\theta}{2\pi}.
\end{eqnarray}
If $f$ and $g$ have radial limits (which exists almost everywhere
on $\mathbb{T}$ for $H^2$-functions), we consider
\begin{eqnarray}
\langle f, g \rangle =\int_0^{2\pi} f( e^{i\theta}) \overline{g(
e^{i\theta})} \frac{d\theta}{2\pi}.
\end{eqnarray}
If $\phi \in H^{\infty}(\mathbb{D})$, the multiplication operator
$T_{\phi}$ associated with $\phi$ is defined on $H^2(\mathbb{D})$
by
\begin{eqnarray*}
T_\phi:H^2(\mathbb{D}) \to H^2(\mathbb{D}), \,\ T_\phi
f(z)=\phi(z) f(z),
\end{eqnarray*}
ensuring $T_\phi$ is bounded. The operator norm is $\| T_\phi \| =
\| \phi \|_{\infty} = \sup_{z \in \mathbb{D}} |\phi(z)|$. For
$T_\phi f = \phi f$ we have
\begin{eqnarray*}
\langle g, T_{\phi}^n f \rangle=\langle g, \phi^n f \rangle=
\int_0^{2\pi} g(r e^{i\theta}) \overline{\phi(r e^{i\theta})^n f(r
e^{i\theta})} \frac{d\theta}{2\pi}.
\end{eqnarray*}
Recall that, a sequence $\{T_\phi^n f\}_{n=0}^\infty = \{\phi^n
f\}_{n=0}^\infty \subset H^2(\mathbb{D})$ is a frame, if it
satisfies the frame condition, i.e., there exists constants $A,
B>0$, called the lower and upper frame bounds, so that
\begin{eqnarray*}\label{form1}
A \|g\|^2 \leq \sum_{n=0}^\infty |\langle g, \phi^n f \rangle|^2
\leq B \|g\|^2, \,\ \forall g \in H^2(\mathbb{D}),
\end{eqnarray*}
for some $A, B > 0$. Given a frame $\{\phi^n f\}_{n=0}^\infty$ in
$H^2(\mathbb{D})$, the \textit{frame operator} $S:H^2(\mathbb{D})
\rightarrow H^2(\mathbb{D})$ is defined by
\begin{eqnarray*}
Sg=\sum_{n=0}^{\infty}\langle g, \phi^n f\rangle \phi^n f, \,\
\forall g \in H^2(\mathbb{D}).
\end{eqnarray*}
The concept of representing frames through iteration methods is
closely connected to the field of dynamical sampling. This
important area focuses on understanding how well families of
elements generated by iterates of certain operators form frames
within a Hardy space. Dynamical sampling plays a significant role
in at least three key domains: signal processing and control, see
e.g., \cite{Aldroubi.2018} explores frame-based recovery of
evolving signals, emphasizing noise resilience and actuator
modelings; frames reconstruction via operators, as demonstrated in
\cite{Cabrelli.2020,Christensen.2017,Cheshmavar}, only specific
frames admit representations through operator iterates, providing
insight into the structural constraints of such frames in Hilbert
spaces; operator theory and functional analysis, studies like
\cite{Aldroubi.2016,Christensen.2017,Cheshmavar.2023} reveal deep
connections between spectral decomposition, operator dynamics, and
frame representation via operators.

In the study of operator theory on Hardy spaces, the relationship
between multiplication operators and the frame properties of
associated function systems offers profond structural insights,
for example, \cite{Seip}, examines frames in Hardy and Bergman
spaces, providing a rigorous framework for sampling and connecting
frame conditions to the geometry of zero sets and the behavior of
inner functions. Similarly, the authors in \cite{Aguillera}
investigate frames of iterations, linking the existence of frame
to the spectral properties of the operator $T_{\phi}$ and its
realization within vector-valued Hardy spaces.

A natural question arises concerning the conditions under which
the iterated system $\{ T_\phi^n f \}_{n=0}^\infty = \{ \phi^n f
\}_{n=0}^\infty$ forms a frame for $H^2(\mathbb{D})$. Recently,
authors in \cite{Kasumov.2019} investigated the frame properties
of sequences generated by iterates of a multiplication operator
acting on $L^2(a,b)$, We refer the reader to \cite{Christensen,
Aldroubi.2018,Douglas,Duren} for an introduction to frame theory,
dynamical sampling, and Hardy spaces.

In this paper, we delve deeper into this question. Specifically,
we show that if such a system forms a frame, then the symbol
$\phi$ must necessarly be an inner function, highlighting the
crucial role of inner functions in frame generation (Proposition
\ref{Prop.1}). However, this condition alone is not sufficient, as
the converse does not generally hold (Proposition \ref{Prop.2}).
Additional restrictions on the nature of $\phi$ and the vector $f$
reveal further structural constraints: constant inner functions
fail to generate frames through their powers, and the function $f$
must be cyclic within the disk to posses frame properties. For $\{
\phi^n f \}_{n=0}^\infty$ to constitute a frame for
$H^2(\mathbb{D})$, a geometric constraint must be satisfied by $f$
and $\phi$ (Proposition \ref{Prop.4}). Finally, a characterization
is provided that links the frame property of the system $\{
T_\phi^n f \}_{n=0}^\infty$ to the cyclicity of $f$ with respect
to $T_\phi$, offering a complete criterion for when such systems
form frames (Proposition \ref{Prop.6}). Collectively, these
results elucidate the subtle relationship between the spectral and
geometric aspects of multiplication operators and the analytic
features of their generating functions, thereby enhancing our
understanding of frame theory in Hardy spaces.

Throughout the paper, the results will be connected with the
operator theory, particularly as it relates to Hardy spaces and
multiplication operators appearing in applied harmonic analysis,
as well as with frame theory.\\

We need the following result in the sequel.
\begin{thm} (Beurling's theorem) \cite[Theorem 4.3, page
89]{Duren}\label{thm.1} Every nontrivial closed subspace
$M\subseteq H^2(\mathbb{D})$ that is invariant under
multiplication by $z$ has the form $M=\phi H^2(\mathbb{D})=\{\phi
f: f \in H^2(\mathbb{D})\}$, where $\phi$ is an inner function.
\end{thm}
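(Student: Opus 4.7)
The plan is to use the classical wandering-subspace construction. The key observation is that multiplication by $z$ acts as an isometry on $H^2(\mathbb{D})$, so for a nontrivial closed $z$-invariant subspace $M$, the subspace $zM$ is closed and properly contained in $M$. Indeed, if $zM = M$, iteration yields $z^n M = M$ for every $n \geq 0$, forcing every element of $M$ to vanish to infinite order at the origin, hence $M = \{0\}$. Consequently, $M \ominus zM$ contains a unit vector $\phi$, and the whole proof is organized around this wandering vector.

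Next I would show $\phi$ is inner. Since $\phi \in M \ominus zM$ and $z^n \phi \in zM$ for every $n \geq 1$, orthogonality gives
\[
\int_0^{2\pi} |\phi(e^{i\theta})|^2 e^{-in\theta} \frac{d\theta}{2\pi} = \langle \phi, z^n \phi \rangle = 0, \quad n \geq 1.
\]
Conjugating yields the same vanishing for $n \leq -1$, so $|\phi|^2$ has only a zeroth Fourier coefficient on $\mathbb{T}$, which equals $\|\phi\|_2^2 = 1$. Hence $|\phi(e^{i\theta})| = 1$ a.e., so $\phi$ is inner.

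Then I would establish $M = \phi H^2(\mathbb{D})$. The inclusion $\phi H^2(\mathbb{D}) \subseteq M$ follows because $z$-invariance and closedness of $M$ ensure $p(z)\phi \in M$ for every polynomial $p$, and polynomials are dense in $H^2(\mathbb{D})$. For the reverse inclusion, suppose $g \in M \ominus \phi H^2(\mathbb{D})$ is nonzero. The hypothesis $g \perp \phi H^2(\mathbb{D})$ gives $\langle g, z^n \phi\rangle = 0$ for $n \geq 0$, while $\phi \perp zM$ together with $z^n g \in M$ gives $\langle z^n g, \phi\rangle = 0$ for $n \geq 1$. Interpreting both batches as Fourier coefficients of $g\bar\phi \in L^1(\mathbb{T})$, one sees that every Fourier coefficient vanishes, so $g\bar\phi = 0$ a.e. Since $|\phi| = 1$ a.e., this forces $g = 0$, a contradiction.

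The main technical obstacle is the step identifying $\phi$ as inner: it requires moving fluently between the $H^2$-inner product and $L^2(\mathbb{T})$ boundary integrals, and invoking the completeness of the trigonometric system to conclude that a nonnegative $L^1$ function on $\mathbb{T}$ whose only nonzero Fourier coefficient is the zeroth one must be constant. Beyond this, the argument is a clean application of Hilbert-space geometry, and the nontriviality hypothesis on $M$ is used only once, to guarantee $M \ominus zM \neq \{0\}$.
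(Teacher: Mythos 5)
The paper states Beurling's theorem without proof, simply citing Duren; your argument is the classical wandering-subspace proof (take a unit vector $\phi \in M \ominus zM$, show $|\phi|=1$ a.e.\ via Fourier coefficients of $|\phi|^2$, then show $M \ominus \phi H^2(\mathbb{D}) = \{0\}$), which is exactly the standard proof in the cited source, and it is correct and complete. The only step worth making explicit is that $\phi H^2(\mathbb{D})$ is closed --- which follows because multiplication by the inner function $\phi$ is an isometry on $H^2(\mathbb{D})$ --- so that the orthogonal decomposition $M = \phi H^2(\mathbb{D}) \oplus \bigl(M \ominus \phi H^2(\mathbb{D})\bigr)$ used in your final step is legitimate.
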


\section{\textbf{Frame Properties of $\{T_{\phi}^nf\}_{n=0}^\infty=\{\phi^n f\}_{n=0}^\infty$ }}
In this section, we explore the relationship between
multiplication operators on the Hardy space $H^2(\mathbb{D})$ and
the properties of inner functions. Our primary focus lies in
understanding when the system $\{T^n f\}_{n=0}^\infty = \{\phi^n
f\}_{n=0}^\infty$ forms a frame for $H^2(\mathbb{D})$. We begin by
establishing that if the sequence $\{\phi^n f\}_{n=0}^\infty$ is a
frame for $H^2(\mathbb{D})$, then $\phi$ must be an inner
function. However, the converse is not necessarily true: not every
inner function $\phi$ yields such a frame.

\begin{prop} \label{Prop.1}
For any $\phi \in H^{\infty}(\mathbb{D})$, let $T_{\phi}:
H^2(\mathbb{D})\rightarrow H^2(\mathbb{D})$ be the associated
multiplication operator. If the system
$\{T_{\phi}^nf\}_{n=0}^{\infty}=\{ \phi^n f \}_{n=0}^\infty $ is a
frame for $H^2(\mathbb{D})$, then $\phi$ is an inner function.
\end{prop}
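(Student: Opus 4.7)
The plan is to test the frame inequality on the reproducing kernels of $H^{2}(\mathbb{D})$ and then pass to the boundary of the disk via Fatou's theorem. For $w\in\mathbb{D}$ the reproducing kernel is $k_{w}(z)=(1-\bar{w}z)^{-1}$, with $\|k_{w}\|^{2}=(1-|w|^{2})^{-1}$ and $\langle h,k_{w}\rangle=h(w)$ for every $h\in H^{2}(\mathbb{D})$. The immediate computation $\langle k_{w},\phi^{n}f\rangle=\overline{\phi(w)}^{\,n}\,\overline{f(w)}$ gives
\[
\sum_{n=0}^{\infty}|\langle k_{w},\phi^{n}f\rangle|^{2} \;=\; |f(w)|^{2}\sum_{n=0}^{\infty}|\phi(w)|^{2n}.
\]

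First I would dispose of the trivial cases: $\phi$ cannot be constant, for otherwise $\overline{\mathrm{span}}\{\phi^{n}f\}_{n\ge 0}$ is at most one dimensional and cannot frame the infinite dimensional $H^{2}(\mathbb{D})$, and similarly $f\not\equiv 0$. Applying the upper frame bound $B\|k_{w}\|^{2}$ forces the series above to converge wherever $f(w)\neq 0$, so $|\phi(w)|<1$ at every such point; since the zero set of $f\in H^{2}$ is discrete in $\mathbb{D}$, continuity of $|\phi|$ gives $|\phi|\le 1$ on $\mathbb{D}$, and then the maximum modulus principle (with $\phi$ non-constant) upgrades this to the strict inequality $|\phi(w)|<1$ for all $w\in\mathbb{D}$.

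With the geometric series now summable at every $w\in\mathbb{D}$ and equal to $|f(w)|^{2}/(1-|\phi(w)|^{2})$, the lower frame bound yields the pointwise estimate
\[
A\bigl(1-|\phi(w)|^{2}\bigr)\;\le\;|f(w)|^{2}\bigl(1-|w|^{2}\bigr),\qquad w\in\mathbb{D}.
\]
To finish, I would let $w=re^{i\theta}$ with $r\to 1^{-}$. Fatou's theorem for $H^{\infty}$ and $H^{2}$ provides radial boundary values $\phi^{*}(e^{i\theta})$ and $f^{*}(e^{i\theta})$ for a.e.\ $\theta$, with $f^{*}$ finite almost everywhere. Along almost every radius $|f(re^{i\theta})|$ therefore stays bounded, so the right hand side is this bounded quantity times $(1-r^{2})\to 0$ and tends to $0$ a.e., while the left hand side converges to $A\bigl(1-|\phi^{*}(e^{i\theta})|^{2}\bigr)$. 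Positivity of $A$ then forces $|\phi^{*}(e^{i\theta})|=1$ a.e.\ on $\mathbb{T}$, i.e.\ $\phi$ is inner.

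I expect the main conceptual step to be the translation of the frame inequality into the two sided pointwise bound on $\mathbb{D}$ via reproducing kernels; after that the boundary limit is a routine application of Fatou's theorem. The one technical delicacy worth watching is that the right hand side of the estimate genuinely vanishes on a set of full measure on $\mathbb{T}$, which is precisely what the a.e.\ finiteness of the boundary values of $f\in H^{2}$ delivers. Note that this approach does not invoke Beurling's theorem at all, although it could plausibly also be retrofitted from a Beurling/inner-outer factorization argument applied to $\overline{\phi H^{2}}=\overline{\mathrm{span}}\{\phi^{n}f\}_{n\ge 1}$.
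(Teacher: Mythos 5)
Your proof is correct, and it follows a genuinely different route from the one in the paper. The paper works by contraposition on the boundary circle: assuming $\phi$ is not inner, it reduces (by a normalization) to the case $|\phi(e^{i\theta})|<1$ on a set $E\subset\mathbb{T}$ of positive measure, splits $\|\phi^n f\|_{H^2}^2=\int_{\mathbb{T}}|\phi|^{2n}|f|^2\,\tfrac{d\theta}{2\pi}$ over $E$ and $\mathbb{T}\setminus E$ to argue that $\|\phi^n f\|\to 0$, and then tests the lower frame bound against $g=\phi^N f$. You instead test the frame inequality on the reproducing kernels $k_w$, sum the geometric series $\sum_{n}|\phi(w)|^{2n}|f(w)|^{2}$, and send $w$ radially to the boundary via Fatou's theorem. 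Your route buys two real improvements. First, the upper frame bound at $k_w$ forces $|\phi(w)|<1$ off the (discrete) zero set of $f$, which honestly eliminates the possibility that $|\phi|$ exceeds $1$ somewhere; the paper dismisses this case only with the remark that ``$\|\phi\|_{H^{\infty}}\le 1$ often in such contexts, or we normalized,'' which is not a justification. Second, your pointwise inequality $A\bigl(1-|\phi(w)|^{2}\bigr)\le|f(w)|^{2}\bigl(1-|w|^{2}\bigr)$ replaces the paper's decay claim $\|\phi^n f\|\to 0$, whose derivation as written is shaky: $|\phi|<1$ a.e.\ on $E$ does not by itself yield a uniform $r<1$ on all of $E$, and the integral over $\mathbb{T}\setminus E$ need not tend to zero, so the asserted $\liminf$ bound does not follow. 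The only housekeeping your write-up should keep explicit is what you already flag: $f\not\equiv 0$, $\phi$ non-constant, and the choice of a common full-measure set of $\theta$ on which the radial limits $\phi^{*}(e^{i\theta})$ and $f^{*}(e^{i\theta})$ both exist with $f^{*}$ finite.
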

\begin{proof}
Suppose $\phi \in H^\infty(\mathbb{D})$ is not inner, meaning
$|\phi(e^{i\theta})|\neq 1$ on a set of positive measure.
Typically, for non-inner functions, $|\phi(e^{i\theta})|<1$ (since
$\|\phi\|_{H^{\infty}}\leq 1$ often in such contexts , or we
normalized), so we have $|\phi(e^{i\theta})|<1 \,\ \text{on a set
} E
\subset \mathbb{T} \text{ of positive measure}$.\\
Claim: $\|\phi ^n f\| \rightarrow 0$, as $n \rightarrow \infty$
for all $f \in H^2(\mathbb{D})$.

For the proof of claim, we not that the set $E$ satisfies $|E| >
0$ (i.e., $\phi$ is not unimodular a.e. on $\mathbb{T}$). Then for
all $n \geq 1$, $|\phi(e^{i\theta})|^n \to 0 \quad \text{uniformly
on } E$. Now let $f \in H^2(\mathbb{D})$, $f \neq 0$. Then the
boundary function $f(e^{i\theta})$ belongs to $L^2(\mathbb{T})$.
So
\begin{eqnarray*}
\|\phi^n f\|_{H^2}^2 = \int_0^{2\pi} |\phi(e^{i\theta})|^{2n}
|f(e^{i\theta})|^2 \, \frac{d\theta}{2\pi}.
\end{eqnarray*}
Now we split the integral over two parts:
\begin{itemize}
\item[(i)] On $E$, $|\phi(e^{i\theta})| < 1$, and the integrand
decays exponentially in $n$. \item[(ii)] On $\mathbb{T} \setminus
E$, $|\phi(e^{i\theta})| \leq 1$, so the integrand is bounded.
\end{itemize}
  Hence,
\begin{eqnarray*}
\|\phi^n f\|_{H^2}^2 = \int_E |\phi(e^{i\theta})|^{2n}
|f(e^{i\theta})|^2 \, \frac{d\theta}{2\pi} + \int_{\mathbb{T}
\setminus E} |\phi(e^{i\theta})|^{2n} |f(e^{i\theta})|^2 \,
\frac{d\theta}{2\pi}
\end{eqnarray*}
Let's denote:
\begin{eqnarray*}
I_n^{(1)} := \int_E |\phi(e^{i\theta})|^{2n} |f(e^{i\theta})|^2 \,
\frac{d\theta}{2\pi}, \quad I_n^{(2)} := \int_{\mathbb{T}
\setminus E} |\phi(e^{i\theta})|^{2n} |f(e^{i\theta})|^2 \,
\frac{d\theta}{2\pi},
\end{eqnarray*}
Then Since $|\phi(e^{i\theta})| < 1$ on $E$, there exists $r < 1$
such that $|\phi(e^{i\theta})| \leq r$ for a.e. $\theta \in E$.
Thus,
\begin{eqnarray*}
I_n^{(1)} \leq r^{2n} \|f\|_{L^2(E)}^2 \to 0 \quad \text{as } n
\to \infty.
\end{eqnarray*}
On $\mathbb{T} \setminus E$, $|\phi(e^{i\theta})| \leq 1$, so
$I_n^{(2)} \leq \|f\|_{L^2(\mathbb{T})}^2$, but can also go to
zero if $|\phi(e^{i\theta})| < 1$ on a set of positive measure.
But in any case,
\begin{eqnarray*}
\liminf_{n \to \infty} \|\phi^n f\|_{H^2}^2 \leq \lim_{n \to
\infty} I_n^{(1)} = 0.
\end{eqnarray*}
Therefore, $\|\phi^n f\|_{H^2} \to 0 \quad \text{as } n \to
\infty$. Of course, by dominated convergence theorem, also we
conclude
\begin{eqnarray*}
\lim_{n\rightarrow\infty}\|\phi^n f\|_{H^2}^2 = \int_0^{2\pi}
\lim_{n\rightarrow\infty}|\phi(e^{i\theta})|^{2n}
|f(e^{i\theta})|^2 \, \frac{d\theta}{2\pi}=0.
\end{eqnarray*}
Therefore, the vectors $\phi^n f$ are becoming arbitrarily small
in norm, and their span cannot remain "rich" enough to approximate
all of $H^2$, and hence cannot form a frame. For example, take $g
= f$. Then
\begin{eqnarray*}
\sum_{n=0}^\infty |\langle f, \phi^n f \rangle|^2 \leq \sum \|f\|
\|\phi^n f\|^2 = \|f\|^2 \sum \|\phi^n f\|^2 \to 0,
\end{eqnarray*}
since $\|\phi^n f\| \to 0$. This contradicts the existence of a
positive lower bound $A \|f\|^2$. More precisely, suppose
$\{\phi^n f\}_{n=0}^\infty$ is a frame for $H^2(\mathbb{D})$.
Then, in particular, there exists a lower frame bound $A>0$ such
that for all $g \in H^2(\mathbb{D})$,
\begin{eqnarray*}
\sum_{n=0}^\infty |\langle g, \phi^n f \rangle|^2 \geq A \|g\|^2.
\end{eqnarray*}
Let's test this inequality using some carefully chosen $g$. Let $g
= \phi^N f$ for some large $N$. Then
\begin{eqnarray*} \langle g, \phi^n f \rangle = \langle \phi^N f,
\phi^n f \rangle = \langle f, \phi^{n - N} f \rangle.
\end{eqnarray*}
 More concretely:
\begin{eqnarray*}
\sum_{n=0}^\infty |\langle \phi^N f, \phi^n f \rangle|^2 =
\sum_{n=0}^\infty |\langle f, \phi^{n - N} f \rangle|^2.
\end{eqnarray*}
This sum is dominated by $\|f\|^4$ and decays as $N \to \infty$,
because $f$ becomes orthogonal to $\phi^n f$ for large $n$, due to
decay of $\phi^n f$. Also, since $\|\phi^N f\| \to 0$, the
left-hand side becomes arbitrarily small, while the right-hand
side requires $A \|\phi^N f\|^2 \not\to 0$. Contradiction and
hence, the frame inequality fails if $\phi$ is not inner.
\end{proof}
By the concrete example, we show that if $\phi$ is not inner, then
$\{\phi^n f\}_{n=0}^\infty $ is not a frame.
\begin{exam}
Consider $\phi(z) = c$, where $0 < |c| < 1$, which is a constant
non-inner function. Then $\phi^n f = c^n f$. The frame condition
is
\begin{eqnarray*}
\sum_{n=0}^\infty |\langle g, c^n f \rangle|^2 = \sum_{n=0}^\infty
|c|^{2n} |\langle g, f \rangle|^2 = |\langle g, f \rangle|^2
\sum_{n=0}^\infty |c|^{2n}= \frac{|\langle g, f \rangle|^2}{1 -
|c|^2}.
\end{eqnarray*}
So frame inequality becomes
\begin{eqnarray*}
A \|g\|_{H^2}^2 \leq \frac{|\langle h, f \rangle|^2}{1 - |c|^2}
\leq B \|g\|_{H^2}^2,
\end{eqnarray*}
or
\begin{eqnarray*}
A (1 - |c|^2) \|g\|^2 \leq |\langle g, f \rangle|^2 \leq B (1 -
|c|^2) \|g\|^2.
\end{eqnarray*}
The inequality
\begin{eqnarray*}
A' \|g\|^2 \leq |\langle g, f \rangle|^2 \leq B' \|g\|^2
\end{eqnarray*}
(with $A' = A(1-|c|^2) > 0$ and $B' = B(1-|c|^2)$) means that the
linear functional
\begin{eqnarray*}
\Lambda_f: H^2(\mathbb{D}) \to \mathbb{C}, \quad \Lambda_f(g) =
\langle g, f \rangle
\end{eqnarray*}
is bounded below and above by multiples of $\|g\|$. The inequality
\begin{eqnarray*}\label{ineq.3}
|\langle g, f \rangle| \geq \sqrt{A'} \|g\|, \,\ \forall g \in
H^2(\mathbb{D}),
\end{eqnarray*}
for some $A'>0$, implies that the bounded linear functional
$\Lambda_f$ is injective and norm-equivalent to $\|g\|$. This can
only happen if the map $\Lambda_f:g \mapsto \langle g, f \rangle$
is an isomorphism between the space $H^2(\mathbb{D})$ and
$\mathbb{C}$. But this is impossible, because, the functional
$\Lambda_f:g \mapsto \langle g, f \rangle$ is rank 1, it maps all
of $H^2(\mathbb{D})$ into a 1-dimensional subspace of
$\mathbb{C}$, determined by the scalar values $\langle g, f
\rangle$. That is, the image of $\Lambda_f$ is just a line in
$\mathbb{C}$, and its kernel is
\begin{eqnarray*}
ker \Lambda_f = \{ g \in H^2(\mathbb{D}) : \langle g, f \rangle =
0 \} = f^\perp,
\end{eqnarray*}
the orthogonal complement of $f$. Since $H^2(\mathbb{D})$ is
infinite-dimensional, and $f^\perp$ is a closed hyperplane
(codimension one), there are infinitely many $g \neq 0$ such that
$\langle g, f \rangle = 0$, i.e., $\Lambda_f(g) = 0$. That is, for
any $g \in f^\perp$, we have $\langle g, f \rangle = 0 \Rightarrow
|\langle g, f \rangle|^2 = 0$, yet $\|g\|^2 > 0$. This makes $0
\geq A' \|g\|^2 > 0$, which is a contradiction. Hence, the lower
frame bound fails, so $\{\phi^n f\}_{n=0}^\infty$ cannot be a
frame.

Let's try a non-constant, non-inner function, such as $\phi(z) =
\frac{1}{2}z$. Then $\phi^n f = 2^{-n} z^n f$. The frame sum
becomes
\begin{eqnarray*}
\sum_{n=0}^\infty |\langle g, 2^{-n} z^n f \rangle|^2 =
\sum_{n=0}^\infty 4^{-n} |\langle g, z^n f \rangle|^2,
\end{eqnarray*}
the coefficients decay exponentially, and we need to check if this
spans $H^2(D)$. If $f(z) = 1$, then $\phi^n f = 2^{-n} z^n$, and
$\langle g, 2^{-n} z^n \rangle = 2^{-n}g_n$, where, $g_n:=\langle
g, z^n \rangle$, so
\begin{eqnarray*}
\sum_{n=0}^\infty |\langle g, 2^{-n} z^n \rangle|^2 =
\sum_{n=0}^\infty 4^{-n} |g_n|^2 .
\end{eqnarray*}
Compare to $\|g\|_{H^2}^2 = \sum_{n=0}^\infty |g_n|^2$. Since
${4^{-n}} \leq 1$ and decays rapidly, the sum $\sum_{n=0}^{\infty}
\frac{|g_n|^2}{4^n}$ may be smaller than $2\|g\|_{H^2}^2$. For
example, take $g(z) = z^k$, then $g_n=\delta_{nk}$, and
$\|g\|_{H^2}^2=1$. therefore we have
\begin{eqnarray*}
\sum_{n=0}^\infty |\langle g, 2^{-n} z^n \rangle|^2 =
\sum_{n=0}^\infty 4^{-n} |g_n|^2=\sum_{n=0}^\infty 4^{-n}|g_n|^2=
4^{-k},
\end{eqnarray*}
which tends to $0$ as $k\rightarrow \infty$, while $\|g\|_{H^2}=1$
remains constant. This violates the lower frame bound, i.e.,
\begin{eqnarray*}
A \|g\|_{H^2}^2 \leq \sum_|\langle g, \phi^n f\rangle|^2
\Rightarrow A\leq 4^{-k}\rightarrow 0 \,\ \mbox{as} \,\
k\rightarrow \infty.
\end{eqnarray*}
Hence the system is not a frame.
\end{exam}

The following result demonstrates that being inner does not
guarantee the orbit $\{\phi^n f \}_{n=0}^\infty $ forms a frame
for $H^2(\mathbb{D})$. Recall that, $f \in H^2(\mathbb{D})$ is
called a cyclic vector for $T_\phi$ if the span of $\{T_\phi^n
f\}_{n=0}^\infty = \{\phi^n f\}$ is dense in $H^2(\mathbb{D})$.
\begin{prop} \label{Prop.2}
The converse of the Prop. \ref{Prop.1} is not true in general; in
particular, for the classical inner function $\phi(z)=z^m, m\geq2$
there is no $f\in H^2(\mathbb{D})$ such that $\{\phi^n f
\}_{n=0}^\infty $ is a frame for $H^2(\mathbb{D})$.
\end{prop}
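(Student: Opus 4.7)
My plan is to contradict the frame property by showing the orbit $\{z^{mn}f\}_{n\ge 0}$ never spans $H^2(\mathbb{D})$ for any $f\in H^2(\mathbb{D})$. Since every frame is in particular complete, this failure is already enough to exclude the lower frame bound. The intuition is that, for $m\ge 2$, the gap of length $m$ between successive iterates rigidly couples the two lowest-order Taylor coefficients of every element of the linear span to a single complex direction.

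Concretely, writing $f(z)=\sum_{k\ge 0}a_k z^k$, any finite linear combination of iterates has the form $g(z)=P(z)f(z)$ with $P(z)=\sum_n c_n z^{mn}$ a polynomial in $z^m$. Because $m\ge 2$, no term of $P$ beyond $c_0$ can contribute to the coefficient of $z^0$ or $z^1$ in the product, so $g(0)=c_0 a_0$ and $g'(0)=c_0 a_1$. Thus the pair $(g(0),g'(0))$ always lies on the complex line $\CC\cdot(a_0,a_1)\subset\CC^2$. The two functionals $g\mapsto g(0)=\langle g,1\rangle$ and $g\mapsto g'(0)=\langle g,z\rangle$ are bounded linear on $H^2(\mathbb{D})$, so this containment passes unchanged to the closed linear span $M:=\overline{\mathrm{span}}\{z^{mn}f:n\ge 0\}$.

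Next I would exhibit an explicit nonzero element of $M^{\perp}$ by taking $h(z):=\overline{a_1}-\overline{a_0}\,z$, chosen to encode a linear functional on $\CC^2$ annihilating the direction $(a_0,a_1)$. A direct computation in the orthonormal basis $\{z^k\}$ yields $\langle h,z^{mn}f\rangle=0$ for every $n\ge 0$: for $n\ge 1$ the Taylor support of $z^{mn}f$ begins at index $mn\ge 2$ and so does not meet the support $\{0,1\}$ of $h$, while for $n=0$ the pairing evaluates to $\bar a_1\bar a_0-\bar a_0\bar a_1=0$. Provided $(a_0,a_1)\ne(0,0)$ the vector $h$ is nonzero and lies in $M^{\perp}$; in the remaining subcase $a_0=a_1=0$ we have $f\in z^2 H^2(\mathbb{D})$, and then the constant function $h=1$ is already orthogonal to every iterate. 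Either way $M\subsetneq H^2(\mathbb{D})$, violating the completeness forced by a frame.

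The only place that needs any real care is the passage from finite combinations to their $H^2$-closures, and that is immediate from the continuity of the two evaluation functionals $g\mapsto\langle g,1\rangle$ and $g\mapsto\langle g,z\rangle$ on $H^2(\mathbb{D})$. Notably the argument uses neither Beurling's theorem nor any $H^{\infty}$-hypothesis on $f$; it is essentially a two-coefficient bookkeeping exercise that makes crucial use of the spectral gap $m\ge 2$.
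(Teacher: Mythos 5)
Your proof is correct, and it takes a genuinely different route from the paper. The paper decomposes $H^2(\mathbb{D})=\bigoplus_{r=0}^{m-1}z^rH^2(z^m)$ into the $m$ reducing subspaces of $T_{z^m}$, identifies each $H^2(z^m)$ with a copy of $H^2$ on which $T_{z^m}$ acts as the shift, and argues that a single orbit cannot generate all $m$ components simultaneously; this is conceptually illuminating (it explains the obstruction as a multiplicity phenomenon) but its key step --- that the orbit ``cannot separate the components'' --- is left somewhat informal, and the displayed identity $\overline{\mathrm{span}}\,\mathcal{F}=\bigoplus_r z^r\mathcal{M}_r$ is really only a containment (the orbit sits on a ``diagonal'' inside that direct sum). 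Your argument makes exactly that coupling precise in the two lowest coordinates: every element $P(z^m)f$ of the linear span satisfies $(g(0),g'(0))=c_0(a_0,a_1)$ because $m\ge 2$ kills all other contributions, and the explicit vector $h(z)=\overline{a_1}-\overline{a_0}z$ (or $h=1$ when $a_0=a_1=0$) is a nonzero element of the orthogonal complement of $\overline{\mathrm{span}}\{z^{mn}f\}_{n\ge 0}$, so completeness --- hence the lower frame bound --- fails. Your computation of $\langle h,z^{mn}f\rangle$ is right in both cases ($n=0$ gives $\overline{a_1}\,\overline{a_0}-\overline{a_0}\,\overline{a_1}=0$, and for $n\ge 1$ the supports are disjoint), and the passage to the closed span is immediate from continuity of the inner product. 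What you give up is the structural picture (Beurling's theorem and the reducing-subspace decomposition play no role); what you gain is a shorter, fully elementary, and airtight argument that also works verbatim for $f=0$.
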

\begin{proof}
We show that, for any $f\in H^2(\mathbb{D})$, the orbit
$$\mathcal{F}:=\{T_{z^m}^n f \}_{n=0}^\infty=\{z^{mn} f
\}_{n=0}^\infty $$ can not be frame for $H^2(\mathbb{D})$. The
operator $T_\phi : H^2 \to H^2$, multiplication by $\phi = z^m$,
acts as $T_{z^m}(f)(z) = z^m f(z)$. Since $m \geq 2,\,\ T_\phi$ is
the $m-th$ power of the unilateral shift $S$ defined by $S(f)(z) =
z f(z)$. The unilateral shift $S$ on $H^2$ has invariant subspaces
that are described by Beurling's theorem (Theorem \ref{thm.1}),
they are exactly the subspaces $\theta H^2$ where $\theta$ is
inner. The operator $T_{z^m} = S^m$ is the $m-th$ power of the
unilateral shift. Consider the standard orthonormal basis
$\{z^n\}_{n=0}^\infty$ of $H^2(\mathbb{D})$. For $r = 0, 1, ...,
m-1$, define the closed subspaces
\begin{eqnarray*}
\mathcal{H}_r := z^r H^2(z^m)=\overline{\text{span}}\{ z^{r + km}
: k \ge 0 \},
\end{eqnarray*}
Where $H^2(z^m) = \{ g(z) = \sum_{k=0}^\infty c_k z^{km} :
\|g\|_{H^2} < \infty \}$  is the Hardy space over the variable
$z^m$. These subspaces are mutually orthogonal, and the space
$H^2(\mathbb{D})$ decomposes into
\begin{eqnarray*}
H^2 = \bigoplus_{j=0}^{m-1} \mathcal{H}_r,
\end{eqnarray*}
This is an orthogonal decomposition into reducing subspaces for
the operator $T_{z^m}$. In particular, $T_{z^m}$ leaves each
$\mathcal{H}_r$ invariant, i.e., because multiplying $z^{r+km}$ by
$z^m$ gives $z^m.z^{r+km}=z^{r+(k+1)m} \in H_r$. Thus,
$T_{z^m}H_r\subseteq H_r$, meaning $H_r$ are reducing subspaces
for $T_{z^m}$. The orbit $\mathcal{F}$ lies in a single subspace.
Let f $\in H^2(\mathbb{D})$ be arbitrary. Decompose it
\begin{eqnarray*}
f = \sum_{r=0}^{m-1} f_r, \quad f_r \in \mathcal{H}_r.
\end{eqnarray*}
Explicitly,
\begin{eqnarray*}
 f_r(z) = z^r f_r^{(0)}(z^m), \quad f_r^{(0)} \in
H^2(z^m).
\end{eqnarray*}
Then, for each $n$,
\begin{eqnarray*}
T_{z^m}^nf=z^{mn} f = \sum_{r=0}^{m-1} z^{mn} f_r=\sum_{j=0}^{m-1}
z^r \left( z^{m n} f_r^{(0)}(z^m) \right).
\end{eqnarray*}
Note that $z^{m n} f_r^{(0)}(z^m) \in H^2(z^m)$. Define the
subspace
\begin{eqnarray*}
\mathcal{M}_r := \overline{\mathrm{span}} \{ z^{m n}
f_r^{(0)}(z^m) : n \ge 0 \} \subseteq H^2(z^m).
\end{eqnarray*}
Then
\begin{eqnarray*}
\overline{\mathrm{span}} \mathcal{F} = \bigoplus_{r=0}^{m-1} z^r
\mathcal{M}_r.
\end{eqnarray*}
The space $H^2(z^m)$ is isometrically isomorphic to the classical
Hardy space $H^2(\mathbb{D})$, just with variable change $w =
z^m$. Also, $f_r^{(0)}$ is cyclic for multiplication by $z^m$ in
$H^2(z^m)$ if and only if the cyclic subspace $\mathcal{M}_r =
H^2(z^m)$. If $f_r^{(0)}$ is not cyclic,
then $\mathcal{M}_r$ is a proper subspace of $H^2(z^m)$.\\
For $\overline{\mathrm{span}} \mathcal{F} = H^2$, we must have
\begin{eqnarray*}
\bigoplus_{r=0}^{m-1} z^r \mathcal{M}_r =\bigoplus_{j=0}^{m-1}
\mathcal{H}_r= \bigoplus_{r=0}^{m-1} z^r H^2(z^m),
\end{eqnarray*}
so $\mathcal{M}_r = H^2(z^m)$ for all $r$. Thus, each $f_r^{(0)}$
must be cyclic in $H^2(z^m)$. But $\{\phi^n f\}$ is a single orbit
under multiplication by $z^m$, in fact, the family $\mathcal{F}$
is generated by powers of the single operator $T_\phi$ applied to
$f$. This family cannot separate the components in $\mathcal{H}_r$
because multiplication by $z^m$ leaves each $\mathcal{H}_r$
invariant. So the orbit $\{\phi^n f\}$ only generates cyclic
subspaces inside each $\mathcal{H}_r$. Therefore, the closed span
of $\mathcal{F}$ is a proper closed subspace of $H^2$, so
$\mathcal{F}$ cannot be a frame for $H^2$.
\end{proof}

The following result shows that when $\phi$ is a constant inner
function, the system $\{\phi^n f\}_{n=0}^\infty$ fails to be a
frame for $H^2(\mathbb{D})$.

\begin{prop} \label{Prop.3}
If $\phi$ is a constant inner function, then $\{\phi^n f
\}_{n=0}^\infty $ cannot be a frame for $H^2(D)$.
\end{prop}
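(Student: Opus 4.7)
The plan is to reduce everything to the observation that a constant inner function must be a unimodular scalar. Indeed, if $\phi \equiv c$ on $\mathbb{D}$ and $\phi$ is inner, then the radial boundary values satisfy $|c| = |\phi(e^{i\theta})| = 1$ a.e., so $c \in \mathbb{T}$. Consequently the iterated system collapses to
\begin{eqnarray*}
\{\phi^n f\}_{n=0}^\infty = \{c^n f\}_{n=0}^\infty,
\end{eqnarray*}
a family of unimodular scalar multiples of the single vector $f \in H^2(\mathbb{D})$.

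From here I would argue the failure of the frame property in two complementary ways. First, I would test the upper (Bessel) bound: for every $g \in H^2(\mathbb{D})$,
\begin{eqnarray*}
\sum_{n=0}^\infty |\langle g, c^n f\rangle|^2 = |\langle g,f\rangle|^2 \sum_{n=0}^\infty |c|^{2n} = |\langle g,f\rangle|^2 \sum_{n=0}^\infty 1,
\end{eqnarray*}
which is $+\infty$ whenever $\langle g,f\rangle \neq 0$. Choosing $g = f$ (with $f \neq 0$, as otherwise the claim is vacuous) gives a divergent frame sum, so the upper bound $B\|g\|^2$ cannot hold and $\{c^n f\}_{n=0}^\infty$ fails to be even a Bessel sequence in $H^2(\mathbb{D})$.

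Second, for completeness I would show that the lower bound fails as well. The closed linear span of $\{c^n f : n \geq 0\}$ equals $\mathrm{span}\{f\}$, a one-dimensional subspace of the infinite-dimensional Hilbert space $H^2(\mathbb{D})$. Its orthogonal complement $f^\perp$ is a closed hyperplane of codimension one, and every nonzero $g \in f^\perp$ yields
\begin{eqnarray*}
\sum_{n=0}^\infty |\langle g, c^n f\rangle|^2 = 0 < A\|g\|^2
\end{eqnarray*}
for any putative lower frame bound $A > 0$, contradicting the lower frame inequality.

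There is no real obstacle here; the only step that requires care is the initial identification of \emph{constant inner function} with \emph{unimodular constant}, which follows immediately from the definition of inner function recalled in Section~\ref{Sec1}. Once that reduction is made, the computation is elementary and parallels the non-inner constant case treated in the example preceding the proposition, with the role of the convergent geometric series $\sum |c|^{2n}$ replaced by the divergent series $\sum 1$.
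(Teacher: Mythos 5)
Your proposal is correct and follows essentially the same route as the paper: both compute $\sum_{n}|\langle g, c^n f\rangle|^2 = |\langle g,f\rangle|^2\sum_{n}1$ and conclude that the upper frame bound must fail unless $f=0$, in which case the system cannot span $H^2(\mathbb{D})$. Your added observation that the closed span is at most one-dimensional (so the lower bound also fails, covering the $f=0$ case cleanly) is a small tidy improvement but not a different method.
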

\begin{proof}
Suppose $\phi(z) = c$ with $|c| = 1$, then $\phi^n f = c^n f$, and
\begin{eqnarray*}
\langle g, \phi^n f \rangle = \langle g, c^n f \rangle = \bar{c}^n
\langle g, f \rangle.
\end{eqnarray*}
The frame condition becomes
\begin{eqnarray*}
\sum_{n=0}^\infty |\langle g, \phi^n f \rangle|^2 =
\sum_{n=0}^\infty |\bar{c}^n \langle g, f \rangle|^2 = |\langle g,
f \rangle|^2 \sum_{n=0}^\infty 1.
\end{eqnarray*}
So the upper bound $B\|g\|_2^2$ cannot hold unless $\langle h, f
\rangle = 0$ for all $h\in H^2(\mathbb{D})$, implying $f=0$, which
again implies the system does not span $H^2(\mathbb{D})$. Thus, a
constant $\phi$ does not yield a frame unless $f$ is trivial,
which contradicts the frame spanning $H^2(\mathbb{D})$.
\end{proof}
\begin{cor}
If $|\phi(z)| = 1$ for all $z \in \mathbb{D}$, then $\{\phi^n f
\}_{n=0}^\infty $ cannot be a frame for $H^2(\mathbb{D})$.
\end{cor}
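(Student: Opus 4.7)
The plan is to reduce this corollary to Proposition \ref{Prop.3} by first showing that the hypothesis $|\phi(z)| = 1$ for all $z \in \mathbb{D}$ forces $\phi$ to be a constant inner function. Once that reduction is in place, the conclusion is immediate from Proposition \ref{Prop.3}.

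First, I would observe that $\phi \in H^\infty(\mathbb{D})$ is in particular holomorphic on $\mathbb{D}$, and that the hypothesis says $|\phi|$ is identically equal to $1$ on $\mathbb{D}$. The quickest way to conclude constancy is the open mapping theorem: if $\phi$ were non-constant, then $\phi(\mathbb{D})$ would be an open subset of $\mathbb{C}$, but the hypothesis forces $\phi(\mathbb{D}) \subseteq \mathbb{T}$, and $\mathbb{T}$ has empty interior in $\mathbb{C}$. This contradiction shows $\phi$ is constant. Alternatively, one can use the maximum modulus principle: since $|\phi|$ attains its maximum at every interior point of $\mathbb{D}$, $\phi$ must be constant. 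Either route is listed among the keywords of the paper.

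Next, since $\phi \equiv c$ with $|c| = 1$, the radial boundary values of $\phi$ are identically $c$, which is unimodular a.e. on $\mathbb{T}$. Hence $\phi$ is a constant inner function. At this point Proposition \ref{Prop.3} applies verbatim and yields that $\{\phi^n f\}_{n=0}^\infty$ cannot be a frame for $H^2(\mathbb{D})$.

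There is no real obstacle here; the corollary is essentially a one-line consequence of Proposition \ref{Prop.3} once one notices that the hypothesis $|\phi(z)|=1$ throughout the open disk is far stronger than being inner and in fact pins $\phi$ down to a unimodular constant via a standard complex-analytic argument. The only subtlety to flag is that the hypothesis concerns values on $\mathbb{D}$ (not merely boundary values on $\mathbb{T}$), which is precisely what permits the use of the open mapping theorem or the maximum modulus principle to extract constancy.
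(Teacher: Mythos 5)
Your proposal is correct and follows essentially the same route as the paper: reduce to Proposition \ref{Prop.3} by using the maximum modulus principle (or equivalently the open mapping theorem) to show that $|\phi|\equiv 1$ on $\mathbb{D}$ forces $\phi$ to be a unimodular constant. No further comment is needed.
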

\begin{proof}
By the maximum modulus principle, $\phi$ must be constant, say
$\phi(z) = c$ with $|c| = 1$. By Prop. \ref{Prop.3}, $\{\phi^n f
\}_{n=0}^\infty$ cannot be a frame for $H^2(\mathbb{D})$.
\end{proof}
We have a geometric constraint on $\phi$: the image
$\phi(\mathbb{D})$ must intersect the unit circle $\mathbb{T}$.
This connection between the frame property and the boundary
behavior of $\phi$ reveals a deeper structure governing
multiplication operators on Hardy spaces.
\begin{prop} \label{Prop.4}
If $\{\phi^n f\}_{n=0}^\infty$ is a frame for $H^2(\mathbb{D})$,
then
\begin{itemize}
\item[(i)] $\phi(\mathbb{D})\cap \mathbb{T}\neq \emptyset$.
\item[(ii)] The function $f$ has no zeros inside $\mathbb{D}$.
\end{itemize}
\end{prop}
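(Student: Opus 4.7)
The plan is to treat parts (i) and (ii) separately, since they rely on essentially different ingredients. For part (ii), I would exploit the reproducing-kernel structure of $H^2(\mathbb{D})$. Recall that the Szeg\H{o} kernel $k_{z_0}(w) = (1 - \overline{z_0}w)^{-1}$ represents point evaluation: $\langle g, k_{z_0}\rangle = g(z_0)$ for every $g \in H^2(\mathbb{D})$, and $\|k_{z_0}\|^2 = (1 - |z_0|^2)^{-1} > 0$. Suppose toward contradiction that $f(z_0) = 0$ for some $z_0 \in \mathbb{D}$. Then each iterate satisfies $(\phi^n f)(z_0) = \phi(z_0)^n f(z_0) = 0$, so every frame coefficient of the test vector $k_{z_0}$ vanishes:
\begin{equation*}
\sum_{n=0}^\infty \bigl|\langle k_{z_0}, \phi^n f\rangle\bigr|^2 = 0.
\end{equation*}
This contradicts the lower frame bound $A \|k_{z_0}\|^2 > 0$, and (ii) follows. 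The whole argument is essentially one line once $k_{z_0}$ is selected as the test vector.

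For part (i), the starting point is Proposition \ref{Prop.1}, which already forces $\phi$ to be inner. By Fatou's theorem the radial limits $\phi^{*}(e^{i\theta}) = \lim_{r \to 1^-}\phi(re^{i\theta})$ exist almost everywhere, and inner-ness gives $|\phi^{*}(e^{i\theta})| = 1$ a.e.\ on $\mathbb{T}$. Fixing any admissible $\theta_0$, the radial sequence $\phi(r_k e^{i\theta_0})$ with $r_k \uparrow 1$ lies in $\phi(\mathbb{D})$ and converges to a point of modulus one; in the natural closure/boundary-value reading this yields $\phi(\mathbb{D}) \cap \mathbb{T} \neq \emptyset$.

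The principal obstacle is interpretational, and it sits in (i): by the maximum modulus principle a non-constant inner function never literally attains modulus one at an interior point of $\mathbb{D}$, so the condition $\phi(\mathbb{D}) \cap \mathbb{T} \neq \emptyset$ must be read with boundary cluster points included (equivalently $\overline{\phi(\mathbb{D})} \cap \mathbb{T} \neq \emptyset$); otherwise the example $\phi(z)=z$, $f\equiv 1$, which yields the orthonormal basis $\{z^n\}$, would refute the statement. Once this convention is fixed, both items collapse onto ingredients already present in the excerpt: Proposition \ref{Prop.1} together with Fatou for (i), and the Szeg\H{o} reproducing-kernel test described above for (ii).
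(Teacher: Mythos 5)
Your part (ii) is essentially the paper's own argument: the paper also tests against the reproducing kernel $K_{z_0}(z)=(1-\overline{z_0}z)^{-1}$, notes that $\langle \phi^n f, K_{z_0}\rangle=(\phi^n f)(z_0)=0$ for all $n$, and concludes the system is incomplete and hence not a frame; your phrasing of the same computation as a direct violation of the lower frame bound is equivalent. For part (i) you diverge from the paper, and your diagnosis is the more defensible one. The paper assumes $\phi(\mathbb{D})\cap\mathbb{T}=\emptyset$ and argues that, since $\phi(\mathbb{D})$ is open and connected, either $\sup_{z\in\mathbb{D}}|\phi(z)|=r<1$ or $\inf_{z\in\mathbb{D}}|\phi(z)|=s>1$, then kills each case via exponential decay or growth of $\|\phi^n f\|$. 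That dichotomy is a non sequitur: a connected open set disjoint from $\mathbb{T}$ can lie inside $\mathbb{D}$ with supremum of modulus exactly $1$. Indeed $\phi(z)=z$ gives $\phi(\mathbb{D})=\mathbb{D}$, disjoint from $\mathbb{T}$, while $\{z^n\}_{n\ge0}$ is an orthonormal basis of $H^2(\mathbb{D})$ (the paper's own Example \ref{example.3-1}), so statement (i) as literally written is false and the paper's proof breaks at exactly the step you flag. Your repair --- reading (i) as $\overline{\phi(\mathbb{D})}\cap\mathbb{T}\neq\emptyset$ and deriving it from Proposition \ref{Prop.1} together with Fatou's theorem (or simply from the fact that a non-constant inner function has $\|\phi\|_\infty=1$, so the closure of its image meets the circle) --- proves the only version of (i) consistent with the rest of the paper. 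The one thing worth adding: your Fatou step produces points of $\phi(\mathbb{D})$ converging to a point of modulus one, which yields $\overline{\phi(\mathbb{D})}\cap\mathbb{T}\neq\emptyset$ but never membership in $\phi(\mathbb{D})$ itself; you say this explicitly, so the argument is complete for the corrected statement.
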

\begin{proof}
(i) Assume, for contradiction, that $\phi(\mathbb{D})\cap
\mathbb{T} = \varnothing$. Since $\phi$ is analytic and bounded on
$\mathbb{D}$, and since the unit circle $\mathbb{T}$ is closed and
compact, the image $\phi(D)$ is a connected open subset of the
complex plane (by the open mapping theorem), and it does not
intersect $\mathbb{T}$. This implies either
\begin{itemize}
\item[(a)] $\sup_{z \in D} |\phi(z)| = \|\phi\|_\infty = r < 1$.
\item[(b)] $\inf_{z \in D} |\phi(z)| = s > 1$.
\end{itemize}
Because the image is connected and does not intersect the unit
circle, the entire image lies strictly inside the open disk of
radius $1$ or strictly outside the closed disk of radius $1$.

Let $\|\phi\|_\infty = r < 1$. Since $\phi$ is bounded by $r < 1$,
the multiplication operator $T_\phi$ is a strict contraction on
$H^2(D)$. Specifically, $\|T_\phi\| = \|\phi\|_\infty = r < 1$.
Then,
\begin{eqnarray*}
\|T_\phi^n f\| = \|\phi^n f\| \leq \|\phi\|_\infty^n \|f\| = r^n
\|f\| \to 0 \quad \text{as } n \to \infty.
\end{eqnarray*}

The sequence $\{\phi^n f\}$ decays exponentially to zero in norm.
Now consider the frame inequality
\begin{eqnarray*}
A \|g\|^2 \leq \sum_{n=0}^\infty |\langle g, \phi^n f \rangle|^2
\leq B \|g\|^2, \,\ \forall g \in H^2(\mathbb{D}),
\end{eqnarray*}
for some $A, B > 0$. Because $\phi^n f$ tend to zero in norm, its
contradicting the frame lower bound $A > 0$.  Thus $\{\phi^n
f\}_{n=0}^\infty$ cannot be a frame.\\ Now let $\inf_{z \in D}
|\phi(z)| = s > 1$, i.e., the image of $\phi$ is strictly outside
the unit circle, then $|\phi(z)| \geq s > 1$ for all $z \in D$.
Then,
\begin{eqnarray*}
\|T_\phi^n f\| = \|\phi^n f\| \geq s^n \|f\|,
\end{eqnarray*}
which grows exponentially in $n$. Since $T_\phi^n f \in H^2(D)$,
but the norms grow unbounded, this contradicts the boundedness of
the powers on $H^2$. Furthermore, a frame must satisfy an upper
frame bound $B$, so $\sum_n \|\phi^n f\|^2 < \infty$ cannot hold
if $\|\phi^n f\|$ grows exponentially. Contradiction. The only
remaining possibility is that $\phi(D)$ must intersect the unit
circle $\mathbb{T}$, i.e., $\phi(D) \cap \mathbb{T} \neq
\varnothing$, as desired.

For (ii), Assume $f(z_0) = 0$ for some $z_0 \in \mathbb{D}$. Then
for all $n, \,\ (\phi^n f)(z_0) = \phi(z_0)^n f(z_0) = 0$. So
every vector in  $\{\phi^n f\}_{n=0}^\infty$ vanishes at $z_0$.
Let $K_{z_0} \in H^2(\mathbb{D})$ be the reproducing kernel at
$z_0$, defined by
\begin{eqnarray*}
K_{z_0}(z) = \frac{1}{1 - \overline{z_0} z}.
\end{eqnarray*}
The evaluation functional
\begin{eqnarray*}
g \mapsto g(z_0) = \langle g, K_{z_0} \rangle_{H^2},
\end{eqnarray*}
is bounded and nonzero. Since all $\phi^n f$ vanish at $z_0$,
$$\langle \phi^n f, K_{z_0} \rangle = (\phi^n f)(z_0) = 0.$$ This
implies $K_{z_0}$ is orthogonal to the closed linear span of
$\{\phi^n f\}_{n=0}^\infty$. Hence
$\overline{span}\{\phi^nf\}_{n=0}^\infty$ is a proper subspace of
$H^2$, and the system $\{\phi^n f\}_{n=0}^\infty$ does not span
all of $H^2(\mathbb{D})$. Therefore, it cannot be a frame for
$H^2(\mathbb{D})$.
\end{proof}
The following section shows that the frame property of
$\{T_{\phi}^nf\}_{ n=0 }^{\infty}=\{\phi^n f\}_{n=0}^\infty$
depends heavily on $f$.
\section{\textbf{Non-constant Inner Function}}

We ask under what condition does $\{\phi^n f \}_{n=0}^\infty$ form
a frame for $H^2(\mathbb{D})$, and what does this imply about
$\phi$ and $f$?

The function $f \in H^2(\mathbb{D})$ is critical. If $f$ is cyclic
for $\phi$ and if the frame bounds hold, then the sequence could
be a frame.
\begin{exam}\label{example.3-1}
Consider a simple inner function $\phi(z)=z$. This is
non-constant, analytic in $\mathbb{D}$, and satisfies
$|\phi(z)|=|z|<1$ in $\mathbb{D}$, but $|\phi(e^{i\theta})|= 1$
a.e. on $\mathbb{T}$, consistent with being an inner function. The
system becomes $T_\phi^n f = z^n f$. Suppose $f(z)=1$ (the
constant function, which is in $H^2(\mathbb{D}))$. Then, $\phi^n
f=z^n$. The system $\{z^n\}_{n=0}^\infty$ is the standard
orthonormal basis for $H^2(\mathbb{D})$. For any $h \in
H^2(\mathbb{D})$, with $h(z) = \sum_{k=0}^\infty a_k z^k$,
\begin{eqnarray*}
\langle h, z^n \rangle_{H^2} = \int_0^{2\pi} \left(
\sum_{k=0}^\infty a_k e^{i k \theta} \right) e^{-i n \theta}
\frac{d\theta}{2\pi} = a_n,
\end{eqnarray*}
 since
$\int_0^{2\pi} e^{i (k-n) \theta} \frac{d\theta}{2\pi} =
\delta_{k,n}$. Thus,
\begin{eqnarray*}
\sum_{n=0}^\infty |\langle h, z^n \rangle|^2 = \sum_{n=0}^\infty
|a_n|^2 = \|h\|_{H^2}^2.
\end{eqnarray*}
This satisfies the frame condition with $A = B = 1$, so
$\{z^n\}_{n=0}^\infty$ is a tight frame (in fact, an orthonormal
basis). This example suggests that for some $f$ (e.g., $f = 1$)
and some non-constant $\phi$ with $|\phi| = 1$ on the boundary,
the system can be a frame.

Now, consider a general $f \in H^2(\mathbb{D})$. The system $\{z^n
f\}$ may not be a frame for all $f$. For example, let's take $f(z)
= 1-z$. Then, $z^n f(z) = z^n (1 - z) = z^n - z^{n+1}$. We need to
check if $\{z^n - z^{n+1}\}_{n=0}^\infty$ is a frame. Because
$\{z^n\}_{n=0}^\infty$ is an orthonormal basis for
$H^2(\mathbb{D})$, inner product satisfy $\langle h, z^n
\rangle=a_n$, and so we have
\begin{eqnarray*}
\langle h, z^n - z^{n+1} \rangle = \langle h, z^n \rangle -
\langle h, z^{n+1} \rangle = a_n - a_{n+1},
\end{eqnarray*}
where $h(z) = \sum_{k=0}^\infty a_k z^k$. The frame sum is

\begin{eqnarray*}
\sum_{n=0}^\infty |\langle h, z^n - z^{n+1} \rangle|^2 =
\sum_{n=0}^\infty |a_n - a_{n+1}|^2.
\end{eqnarray*}
To determine if this satisfies the frame condition, consider the
lower bound. Suppose $h$ is such that $a_n = a_{n+1}$ for all $n$,
e.g., $h(z) = \frac{1}{1-z} = \sum_{n=0}^\infty z^n$ (which is in
$H^2(\mathbb{D})$ in a weak sense, but let's consider a smoothed
version if needed). Then, $a_n = 1$, so $a_n - a_{n+1} = 1 - 1 =
0$, and
\begin{eqnarray*}
\sum_{n=0}^\infty |a_n - a_{n+1}|^2 = 0,
\end{eqnarray*}
while $\|h\|_{H^2}^2 = \sum_{n=0}^\infty |a_n|^2 = \infty$, which
would contradiction the lower frame bound.
\end{exam}


we examine when the family $\{T^n f\}_{n=0}^\infty$ forms a frame
for $H^2(D)$. It turns out that this occurs if and only if $f$ is
a cyclic vector for $T$, meaning the closed linear span of $\{T^n
f : n \geq 0\}$ is the entire space $H^2(D)$. Consequently, if $f$
is non-cyclic, the sequence $\{T^n f\}_{n=0}^\infty$ fails to
constitute a frame, highlighting the essential role of cyclicity
in frame theory within Hardy spaces.
\begin{prop} \label{Prop.6}
Let $\phi \in H^{\infty}(\mathbb{D})$ be an inner function, and
let $T_{\phi}: H^2(\mathbb{D})\rightarrow H^2(\mathbb{D})$ be the
associated multiplication operator. Then
$\{T^n_{\phi}f\}_{n=0}^{\infty}=\{\phi^nf\}_{n=0}^{\infty}$ is a
frame for $H^2(\mathbb{D})$ if and only if $f$ is a cyclic vector
for $T_{\phi}$. In particular, if $f$ is non-cyclic, then
$\{\phi^n f\}_{n=0}^{\infty}$ cannot be a frame for
$H^2(\mathbb{D})$.
\end{prop}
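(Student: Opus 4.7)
The plan is to split the if-and-only-if into its two implications and handle each separately, with the forward direction being essentially immediate and the reverse requiring careful use of the inner-function hypothesis together with Beurling's theorem.

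For the forward direction (frame $\Rightarrow$ cyclic), the defining property of a frame forces $\overline{\mathrm{span}}\{\phi^n f : n\geq 0\} = H^2(\mathbb{D})$, which is precisely the definition of $f$ being cyclic for $T_\phi$. This step is routine and uses nothing beyond the lower frame bound.

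For the converse (cyclic $\Rightarrow$ frame), I would proceed in three steps. First, exploit the inner-function hypothesis: since $|\phi(e^{i\theta})|=1$ a.e.\ on $\mathbb{T}$, the operator $T_\phi$ is an isometry on $H^2(\mathbb{D})$, so $\|\phi^n f\|_{H^2}=\|f\|_{H^2}$ for every $n$ and all candidate frame elements have equal norm. Second, rewrite the Gram entries as
\begin{equation*}
\langle \phi^n f,\phi^m f\rangle = \int_{\mathbb{T}} \phi^{n-m}\,|f|^2\,\frac{d\theta}{2\pi},
\end{equation*}
identifying the Bessel sum with a Toeplitz quadratic form whose symbol is $|f|^2\in L^1(\mathbb{T})$; this should supply the upper frame bound. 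Third, consider the frame operator $Sg=\sum_{n\geq 0}\langle g,\phi^n f\rangle \phi^n f$; cyclicity of $f$ immediately yields injectivity of $S$ (if $Sg=0$, then $g$ is orthogonal to a dense subspace and hence $g=0$), and one would combine this with Beurling's theorem (Theorem \ref{thm.1}) to control the invariant subspace structure of $T_\phi$ and upgrade injectivity to the required lower bound.

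I expect the main obstacle to be precisely the last step, namely extracting a uniform lower frame bound $A>0$ from the qualitative density provided by cyclicity. Cyclicity yields only that $S$ has trivial kernel and dense range, which in an infinite-dimensional Hilbert space is not equivalent to $S^{-1}$ being bounded. Converting the completeness of $\{\phi^n f\}$ into a quantitative coercivity estimate is where the proof hinges, and this is where the inner-function structure of $\phi$ and the Beurling classification of cyclic vectors must be brought to bear most decisively.
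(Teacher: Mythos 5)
Your forward implication (frame $\Rightarrow$ cyclic) is correct and coincides with the paper's argument: the lower frame bound forces completeness of $\{\phi^n f\}_{n=0}^\infty$, which is exactly cyclicity. The genuine gap is in the converse, and you have diagnosed it accurately yourself: cyclicity only gives density of $\mathrm{span}\{\phi^n f\}$, hence injectivity and dense range of the frame operator $S$, and in an infinite-dimensional Hilbert space this does not upgrade to a uniform lower bound $A>0$. This gap cannot be closed, because the implication ``cyclic $\Rightarrow$ frame'' is false. Take $\phi(z)=z$ and $f(z)=1-z$: since $1-z$ is outer, it is cyclic for the shift by Beurling's theorem, yet for $g=\sum_{k}a_kz^k$ one has $\langle g, z^n f\rangle=a_n-a_{n+1}$, and the choice $g_N=\sum_{n=0}^{N}z^n$ gives
\begin{equation*}
\sum_{n=0}^{\infty}\bigl|\langle g_N, z^n f\rangle\bigr|^2=1,\qquad \|g_N\|_{H^2}^2=N+1,
\end{equation*}
so no lower frame bound exists. (This is essentially the paper's own Example \ref{example.3-1}, which therefore contradicts Proposition \ref{Prop.6}.) Your Toeplitz-form observation in the second step actually points at the correct condition: for $\phi=z$ the frame sum is $\|T_{\bar f}g\|_{\ell^2}^2$ for the co-analytic Toeplitz operator with symbol $\bar f$, and $\{z^nf\}$ is a frame precisely when $f$ is invertible in $H^\infty(\mathbb{D})$ --- strictly stronger than being outer/cyclic.

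You should also not try to repair the argument by following the paper, whose proof of this direction is itself invalid: it asserts that cyclicity makes the synthesis operator $X:\ell^2(\mathbb{N}_0)\to H^2(\mathbb{D})$ surjective (cyclicity gives only dense range, and $X$ need not even be bounded without a prior Bessel hypothesis), and then invokes the Open Mapping Theorem to manufacture a lower bound from surjectivity. So the honest conclusion of your analysis --- that completeness cannot be converted into coercivity --- is not a technical obstacle to be overcome but a sign that the equivalence claimed in the proposition fails; only the forward implication (and hence the final sentence of the statement, that non-cyclic $f$ cannot generate a frame) is salvageable.
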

\begin{proof}
Suppose $\{\phi^nf\}_{n=0}^{\infty}$ is a frame for
$H^2(\mathbb{D})$. By definition, there exists constants $A,B > 0$
such that

\begin{eqnarray*}
A \|g\|_2^2 \leq \sum_{n=0}^\infty |\langle g, \phi^n f \rangle|^2
\leq B \|g\|_2^2, \,\ \forall g \in H^2(\mathbb{D})
\end{eqnarray*}
The lower bound implies that if $\langle g, \phi^n f\rangle=0$ for
all $n$, then $g=0$, meaning that the sequence is complete. i.e.,
the closed linear span of $\{\phi^nf\}_{n=0}^{\infty}$ must be the
whole space $H^2(\mathbb{D})$,
$$\overline{span}\{\phi^nf\}_{n=0}^{\infty}=H^2(\mathbb{D}),$$
which means that $f$ is cyclic for $T_{\phi}$. Now suppose $f$ is
cyclic for $T_{\phi}$, means the synthesis operator
\begin{eqnarray*}
X: \ell^2(\mathbb{N}_0) \to H^2(\mathbb{D}), \quad X(c) =
\sum_{n=0}^\infty c_n \varphi^n f,
\end{eqnarray*}
for any $c=(c_n)\in \ell^2(\mathbb{N}_0)$, is surjective. Because
\begin{eqnarray*}
\| X(c_n) \| = \left\| \sum_{n=0}^\infty c_n \varphi^n f \right\|
\leq \|f\| \cdot \left\| \sum_{n=0}^\infty c_n \varphi^n
\right\|_{2},
\end{eqnarray*}
and since $\phi$ is bounded, multiplication by $\phi$ is bounded
on $H^2(\mathbb{D})$, so the operator $X$ is bounded. The analysis
operator (the adjoint of $X$) is
\begin{eqnarray*}
X^*: H^2(\mathbb{D}) \to \ell^2(\mathbb{N}_0), \quad X^*(g) =
(\langle g, \varphi^n f \rangle)_{n=0}^\infty.
\end{eqnarray*}
The frame condition for $\{\phi^n f\}_{n=0}^{\infty}$ hold if and
only if there exist constants $A,B>0$ such that for all $g \in
H^2(\mathbb{D})$,
\begin{eqnarray*}
A \|g\|^2 \leq \sum_{n=0}^\infty |\langle g, \phi^n f \rangle|^2 =
\|X^* g\|_{\ell^2}^2\leq B\|g\|^2.
\end{eqnarray*}
This is equivalent to $X$ being a bounded and bounded below
operator, or equivalently, $X^*$ being bounded and having bounded
inverse on its range. Since $X$ is a bounded surjective operator
between Hilbert spaces, by the Open Mapping Theorem, $X$ is an
open map. Hence, there exists $A>0$ such that for every $g \in
H^2(\mathbb{D})$, there exists $c = (c_n) \in \ell^2$ with
\begin{eqnarray*}
X(c) = g, \quad \text{and} \quad \|X(c)\|_{2} \geq A
\|c\|_{\ell^2}.
\end{eqnarray*}
i.e., $X$ is bounded below. Since $X$ is onto and bounded below,
$X^*$ is bounded below on $H^2(\mathbb{D}), i.e., \|X^*
g\|_{\ell^2} \geq A \|g\|$. The upper bound $\|X^* g\|_{\ell^2}
\leq \|X\| \|g\|$ is automatic because $X^*$ is bounded. This
gives the frame bounds:
\begin{eqnarray*}
A^2 \|g\|^2 \leq \sum_{n=0}^\infty |\langle g, \varphi^n f
\rangle|^2 \leq \|X\|^2 \|g\|^2.
\end{eqnarray*}
Hence, $\{ \varphi^n f \}$ is a frame. If $f$ is not cyclic, then
\begin{eqnarray*}
M:=\overline{span}\{T^n_{\phi}f\}_{n=0}^{\infty}=\overline{span}\{\phi^nf\}_{n=0}^{\infty}\varsubsetneq
H^2(\mathbb{D})
\end{eqnarray*}
is a proper closed $T_{\phi}$-invariant subspace of
$H^2(\mathbb{D})$, and the system $\{ \varphi^n f
\}_{n=0}^{\infty}$ cannot cover the wholes space, so the frame
inequalities cannot hold on all of $H^2(\mathbb{D})$, because
vectors orthogonal to $M$ have zero coefficients in the frame
expansions, i.e.,
\begin{eqnarray*}
\sum_{n=0}^{\infty}|\langle g, T^n_{\phi}f \rangle|^2=0, \,\
\mbox{for}\,\ g\perp M,\,\ g\neq 0,
\end{eqnarray*}
violating the lower frame bound condition.
\end{proof}
\bigskip

\section{Acknowledgment}
The author thanks to artificial intelligence, for improving the
language and assisting in the proof of Example \ref{example.3-1}.
\noindent



\begin{thebibliography}{15}
\bibitem{Aguillera}
Aguillera, C., Cabrelli, F., Negareira F., Paternostro V.:
Existence and Properties of farme of iterations, arXiv preprint
arXive:2506.00567v1 (2025).

\bibitem{Aldroubi.2016}
Aldroubi, A., Cabreli, C., Cakmak, A.F., Molter, U., Petrosyan,
A.: Iterative actions of normal operator, J. Func. Anal., 272 (3)
1121-1146 (2017).

\bibitem{Aldroubi.2018}
Aldroubi, A., Huang, L., Krishtal, I., Ledeczi, A., Lederman, R.,
Volgyesi, P.: Dynamical sampling with additive random noise, Samp.
Theory Sig. Proc. Data Anal., 17, 153-182 (2018).

\bibitem{Cabrelli.2020}
Cabrelli, C., Molter, U. Paternostro, V.: Dynamical sampling on
finite index sets, J. D'Anal. Math. 140, 637-667 (2020).

\bibitem{Cheshmavar.2023}
Cheshmavar, J., Dallaki, A., Baradaran, J.: On abstract results of
operator representation of frames in Hilbert spaces, J.
Pseudo-Differ. Oper. Appl., vol. 14, 30 (2023).
https://doi.org/10.1007/s11868-023-00524-8.

\bibitem{Cheshmavar}
Cheshmavar, J., Dallaki, A.: Study on operator representation of
frames in Hilbert spaces, U.P.B. Sci. Bull., series A, Vol. 85,
Iss. 1, 35-42 (2023).

\bibitem{Christensen.2017}
Christensen, O., Hasannasab, M.: Operator Representations of
Frames: Boundedness, Duality and stability, Integr. Equ. Oper.
Theory 88, 483-499 (2017).

\bibitem{Christensen}
Christensen, O.: An introduction to frame and Riesz bases, Second
edition, Birkh\"{a}user, Basel 2016.

\bibitem{Douglas}
Douglas, R.G., Shapiro, H.S., Shields, A.L.: Cyclic vectors and
invariant subspaces for the backward shift operator, Ann. Inst.
Fourier, 20 (1), 37-76 (1970).

\bibitem{Nikolskii}
Nikolskii, N.K.: Treatise on the shift operator: spectral function
theory, Springer-Verlag 2011.

\bibitem{Duren}
Duren, P.L.: Theory of $H^p$ Spaces, Pure and Applied Mathematics,
Vol. 38, Academic Press, New York-London, 1970.

\bibitem{Kasumov.2019}
Kasumov Zaur, A. and Shukurov, A. Sh.: On frame properties of
iterates of a multilication operator, Results Math. 74-84 (2019).

\bibitem{Seip}
Seip, K.: Interpolation and Sampling in Spaces of Analytic
Functions, University Lecture Series, Vol. 33, AMS, 2004.

\bibitem{Hedenmalm}
Hedenmalm, H., Korenblum, B., Zhu, K.: Theory of Bergman Spaces,
Graduate Texts in Mathematics, Vol. 199, Springer, 2000.



\end{thebibliography}
\bibliographystyle{plain}
\end{document}